\documentclass{article}
\usepackage{graphicx,color,mathptmx,latexsym,amsmath, amsthm, amscd, amsfonts, amssymb}
\textwidth=14.00cm \textheight=20cm
\footskip=1cm

 \numberwithin{equation}{section}
\usepackage{tikz}
\usepackage{pgf,tikz}
\usetikzlibrary{arrows}
\usepackage{calc}
\usepackage{tikz}
\usetikzlibrary{decorations.markings}
\tikzstyle{vertex}=[circle, draw, inner sep=0pt, minimum size=6pt]
\newcommand{\vertex}{\node[vertex]}
\newtheorem{theorem}{Theorem}[section]

\newtheorem{lemma}[theorem]{Lemma}
\newtheorem{corollary}[theorem]{Corollary}


\newtheorem{thm}{Theorem}[section]

\begin{document}
\thispagestyle{plain}
\begin{center}
{\huge A characterization of  $2$-dimensional Cayley graphs on dihedral groups}
\bigskip
\vspace*{1cm} \\

A. Behtoei\footnote{Corresponding author}\\
 Department of Mathematics, Imam Khomeini International University, \\
 P. O. Box: 34149-16818, Qazvin, Iran, a.behtoei@sci.ikiu.ac.ir
 \vspace*{0.5cm} \\
 Y. Golkhandy Pour\\
Department of Mathematics, Imam Khomeini International University,   \\
P. O. Box: 34149-16818, Qazvin, Iran,  y.golkhandypour@edu.ikiu.ac.ir

\end{center}
\vspace*{1.5cm}
\begin{abstract}
A subset $W$ of the vertices of $G$ is  a  resolving set for $G$ when
for each pair of distinct vertices $u,v\in V(G)$ there exists $w\in W$
such that $d(u,w)\neq d(v,w)$,  where $d(x,y)$ denotes the distance
between  two vertices $x$ and $y$.
The cardinality of a minimum resolving set for $G$ is  the  metric dimension of $G$. 
This concept arise in many diverse areas
including network discovery and verification, robot navigation, problems of pattern recognition and image processing, coin weighing problems,
strategies for the Mastermind game,
combinatorial search and optimization. The problem of finding metric dimension is NP-complete for general graphs.
In this paper, we characterize all of Cayley graphs on dihedral groups with metric dimension is two.
\bigskip \\
\textbf{Keywords:}  Metric dimension, Cayley graph, Dihedral group.  \bigskip \\
\textbf{MSC(2010):}  Primary: 05C25, 05C75, 20B10.

\end{abstract}
\section{Introduction}

Let $\Gamma=(V,E)$ be a simple and connected graph with vertex set $V$ and edge set $E$. The distance between two vertices $x,y\in V$ is the length of a shortest path between them and is denoted by $d(x,y)$. If $d(x,y)=1$, then for convenient we write $x\sim y$.
The neighborhood of $x$ is $N(x)=\{y:~x\sim y\}$. 
For an ordered subset $W=\{w_1,w_2,\ldots,w_k\}$ of vertices and a
vertex $v\in V$, the $k$-vector
$r(v|W):=(d(v,w_1),d(v,w_2),\ldots,d(v,w_k))$ is  called  the
\textit{metric representation} of $v$ with respect to $W$. The set $W$ is
called  a \textit{resolving set} for $\Gamma$ if distinct vertices of $\Gamma$ have
distinct representations with respect to $W$.
Each  minimum resolving set is a \textit{basis}  and the \textit{metric dimension} of $\Gamma$, $\dim_M(\Gamma)$, is
the cardinality of a basis  for $\Gamma$.
This concept has applications in many areas
including network discovery and verification~\cite{net2}, robot navigation~\cite{landmarks}, problems of pattern recognition and image processing~\cite{digital},
coin weighing problems~\cite{coin},
strategies for the Mastermind game~\cite{Mastermind},
combinatorial search and optimization~\cite{coin}.
These concepts were introduced by  Slater in 1975 when he was working with U.S. Sonar and Coast Guard Loran stations and he described the usefulness of these concepts, \cite{Slater1975}.
Independently, Harary and Melter \cite{Harary} discovered these
concepts. Finding families of graphs with constant metric dimension or characterizing $n$-vertex graphs with a specified metric dimension are fascinating problems and atracts the attention of many researchers.
The problem of finding metric dimension is NP-Complete for general graphs  but the metric dimension of trees can be  obtained  using a polynomial time algorithm.
In \cite{European-j-comb1} the metric dimension for two different models of random forests is investigated.
 It is not hard to see that for each $n$-vertex graph $\Gamma$ we have $1\leq \dim_M(\Gamma) \leq  n-1$.
  Chartrand et al.~\cite{Ollerman} proved that for $n\geq 2$, $\dim_M(\Gamma)=n-1$
if and only if $\Gamma$ is the complete graph $K_n$. The metric dimension of each complete $t$-partite graph with $n$ vertices is $n-t$. They also provided a
 characterization of  graphs of order $n$ with
metric dimension $n-2$, see \cite{Ollerman}.  Graphs of order $n$  with metric dimension $n-3$ are characterized in~\cite{n-3}.
  Khuller et al.~\cite{landmarks} and Chartrand et al.~\cite{Ollerman}
  proved that $\dim_M(\Gamma)=1$ if and only if $\Gamma$ is a path $P_n$.
  The metric dimension of the power graph of a cyclic group is determined in \cite{European-j-comb2}. 
  Salman $et~al.$ studied this parameter for the Cayley graphs on cyclic groups \cite{2}.
  Each cycle graph $C_n$ is a  $2$-dimensional graph ($\dim_M(C_n)=2$). All of  2-trees with metric dimension two are characterized in \cite{2-tree} and $2$-dimensional Cayley graphs on Abelian groups are characterized in \cite{CayAbel}.  
Moreover,  in~\cite{chang}  some properties of $2$-dimensional graphs are obtained as below.

\begin{thm}\label{thm:degree of basis elements}{\em\cite{chang}}
Let $\Gamma$ be a $2$-dimensional graph. If  $\{u,v\}$ is a basis for $\Gamma$, then
\begin{enumerate}
\item
there is a unique shortest path $P$ between $u$ and $v$,
\item
the degrees of $u$ and $v$ are at most three,
\item
 the degree of each internal vertex  on $P$ is  at most five.
\end{enumerate}
\end{thm}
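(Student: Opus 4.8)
The plan is to work entirely with the pair of distance coordinates determined by the basis. For a vertex $w$ write $a(w)=d(w,u)$ and $b(w)=d(w,v)$, and set $d=d(u,v)$; since $\{u,v\}$ is a resolving set, the map $w\mapsto(a(w),b(w))$ is injective. Two elementary facts drive everything. First, distances change by at most one across an edge, so if $w\sim w'$ then $a(w')\in\{a(w)-1,a(w),a(w)+1\}$, and likewise for $b$. Second, the triangle inequality gives $a(w)+b(w)\ge d(u,v)=d$ for every vertex $w$, so no vertex can carry a coordinate pair whose sum is less than $d$. A preliminary observation I would record first: if $x$ sits at position $i$ on \emph{any} shortest $u$--$v$ path (so $d(u,x)=i$), then $d(x,v)=d-i$ by applying the triangle inequality from both ends, and hence its representation is forced to be $(i,d-i)$.

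For part (1), suppose $P$ and $P'$ were two shortest $u$--$v$ paths with vertices $x_i$ and $x_i'$ at position $i$. By the preliminary observation both have representation $(i,d-i)$, so injectivity forces $x_i=x_i'$ for every $i$; thus $P=P'$ and the shortest path is unique. For part (2), every neighbor $w$ of $u$ satisfies $a(w)=1$, and the edge constraint gives $b(w)\in\{d-1,d,d+1\}$. Only these three coordinate pairs are available, so injectivity yields $\deg(u)\le 3$; the argument for $v$ is identical.

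Part (3) is where the real work lies. Let $x$ be an internal vertex of $P$, with representation $(i,d-i)$ for some $1\le i\le d-1$. A neighbor $w$ has $a(w)\in\{i-1,i,i+1\}$ and $b(w)\in\{d-i-1,d-i,d-i+1\}$, giving nine candidate pairs; discarding $(i,d-i)$, which is $x$ itself, leaves eight. The lower bound $a(w)+b(w)\ge d$ now eliminates the three pairs $(i-1,d-i-1)$, $(i-1,d-i)$, and $(i,d-i-1)$, whose coordinate sums are $d-2$, $d-1$, and $d-1$. Exactly five admissible pairs remain, namely $(i-1,d-i+1)$, $(i,d-i+1)$, $(i+1,d-i-1)$, $(i+1,d-i)$, and $(i+1,d-i+1)$, of which the first and third are the path-neighbors $x_{i-1}$ and $x_{i+1}$. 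Since injectivity lets at most five vertices realize these pairs, $\deg(x)\le 5$.

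The only genuinely delicate step is the pruning in part (3): passing the naive bound of eight candidate representations requires the sum constraint $a+b\ge d$ rather than merely the edge-Lipschitz bound, and one must verify that exactly three of the eight candidates violate it. Everything else reduces to injectivity of the coordinate map together with the two distance inequalities, so once the preliminary observation and these two inequalities are in hand, all three conclusions follow by a short count.
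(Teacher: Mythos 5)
Your proof is correct, and it is essentially the standard argument for this result: the paper itself states the theorem as a citation to \cite{chang} without reproducing a proof, and the coordinate-counting you give (injectivity of $w\mapsto(d(w,u),d(w,v))$, the edge-Lipschitz property, and the triangle-inequality pruning $a(w)+b(w)\geq d$ that cuts the eight candidate pairs down to five) is exactly how the result is established in that source and in the earlier literature on $2$-dimensional graphs. Nothing is missing; in particular you correctly handle the one delicate point, namely that three of the eight candidate representations around an internal path vertex are infeasible because their coordinate sum is below $d(u,v)$.
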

The $M\ddot{o}bius~ Ladder$  graph $M_n$ is a cubic circulant graph with an even number $n$ of vertices  formed from an $n$-cycle by connecting opposite pairs of vertices in the cycle. For the metric dimension of $M\ddot{o}bius~ Ladders$ we have the following result.

\begin{theorem} \cite{7}  \label{21}
Let $n\geq 8$ be an even number. The metric dimension of each Mobius Ladder $M_n$ is 3 or 4. Specially,  $dim_M(M_n)=3$ when $n\equiv 2 ~(\!\!\!\!\!\mod 8)$.
\end{theorem}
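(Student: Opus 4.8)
\emph{Proof strategy.} The plan is to work in the explicit circulant model $M_n=C_{2m}(1,m)$ on the vertex set $\mathbb{Z}_{2m}$ (here $n=2m$), where $i\sim i\pm1$ and $i\sim i+m$, so that $M_n$ is $3$-regular and vertex-transitive. First I would record a closed form for the distance from the fixed vertex $0$: for $0\le k\le m$ one has $d(0,k)=\min\{k,\,m-k+1\}$, and in general $d(0,k)=f(\bar k)$ with $\bar k=\min\{k,2m-k\}$ and $f(j)=\min\{j,\,m-j+1\}$; by vertex-transitivity every distance is a translate of this. Two independent coincidences are then visible and will drive everything: the reflection $s_0\colon i\mapsto -i$ gives $d(0,k)=d(0,-k)$, while the non-injectivity $f(j)=f(m+1-j)$ gives a purely \emph{metric} coincidence that, crucially, is \emph{not} induced by any automorphism fixing $0$ (the stabiliser of a vertex in $\mathrm{Aut}(M_n)=D_{2m}$ has order $2$). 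Consequently each distance shell about a vertex generically contains four vertices, and the diameter is about $m/2$.

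For the upper bound $\dim_M(M_n)\le 4$ I would exhibit a concrete four-element set, e.g.\ $W_4=\{0,1,m,m+1\}$, and verify it resolves. The point is that $W_4$ has trivial pointwise stabiliser (fixing the two adjacent vertices $0,1$ already forces the identity in $D_{2m}$), and, since every common shell of two of these landmarks has at most four vertices, a short bookkeeping over residue classes of $k$ shows that the four joint coordinates separate them.

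The lower bound $\dim_M(M_n)\ge 3$ is the crux. Note first that Theorem~\ref{thm:degree of basis elements} alone does not settle it: the degree conditions hold everywhere, and some pairs (such as $\{0,2\}$) even admit a unique shortest path, so the unique-path obstruction does not apply uniformly. Instead I would argue by producing, for an arbitrary candidate basis, an unresolved pair. After a rotation and a reflection we may take the basis to be $\{0,v\}$ with $1\le v\le m$. Since $d(0,x)=d(0,-x)$ always, it suffices to find $x\notin\{0,m\}$ with $d(v,x)=d(v,-x)$; writing this as $f(\overline{x-v})=f(\overline{x+v})$ and using $f(a)=f(b)\iff a=b$ or $a+b=m+1$, the problem reduces to solving $\overline{x-v}+\overline{x+v}=m+1$ in $\mathbb{Z}_{2m}$ with $x\notin\{0,m\}$. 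I expect this congruence always to admit a solution for even $n\ge 8$ (with one or two small boundary configurations checked by hand), and each solution yields two distinct vertices with identical representations, so no two-element set resolves $M_n$.

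Finally, for $n\equiv 2\pmod 8$, i.e.\ $m\equiv 1\pmod 4$, I would pin down an explicit three-element set $W_3$ and show it resolves, which together with the lower bound gives $\dim_M(M_n)=3$. The role of the congruence is precisely to disrupt the coincidence $f(j)=f(m+1-j)$: when $m\equiv 1\pmod4$ the fold value $m+1$ has the right residue so that the pairs which would otherwise collide are pushed onto the boundary branch of $f$ and become separated by the third landmark. I anticipate the two genuinely hard points to be (i) the uniform solvability of $\overline{x-v}+\overline{x+v}=m+1$ in the lower bound, and (ii) the residue-class case analysis verifying $W_3$ in the class $m\equiv 1\pmod4$, since near the landmarks and near the diameter the generic four-vertex shells degenerate and must be treated separately.
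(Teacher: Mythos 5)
First, a point of order: the paper offers no proof of this statement at all --- it is imported verbatim from reference \cite{7} --- so there is no internal argument to compare yours against; your proposal must stand on its own, and it contains one genuine gap, located exactly where you flagged uncertainty. Your lower bound restricts attention to unresolved pairs of the reflection form $\{x,-x\}$ and reduces to solving $\overline{x-v}+\overline{x+v}=m+1$. But $\overline{y}\equiv y \pmod 2$ (since $\overline{y}$ is $y$ or $2m-y$), so $\overline{x-v}+\overline{x+v}\equiv 2x\equiv 0 \pmod 2$, whereas $m+1$ is odd whenever $m$ is even. Hence for $m$ even (that is, $n\equiv 0 \pmod 4$) and $1\le v\le m-1$ your congruence has \emph{no} solutions, and your method produces no unresolved pair for half the values of $n$ the theorem covers. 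A concrete instance: in $M_8$ ($m=4$) with $W=\{0,1\}$, every reflection pair $\{x,-x\}$ is resolved, yet $W$ still fails, because the pair $\{2,5\}$ satisfies $r(2|W)=r(5|W)=(2,1)$. This pair is a \emph{fold} pair ($\bar 2+\bar 5=2+3=m+1$) at both landmarks simultaneously --- precisely the coincidence $f(j)=f(m+1-j)$ that you correctly identified as ``not induced by any automorphism,'' but then did not exploit.

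The repair is short and removes the parity split entirely: for a candidate basis $\{0,v\}$ with $1\le v\le m-1$ (normalized as you do via the dihedral automorphism group), take $p=v+1$ and $q=v+m$. Then $d(v,p)=d(v,q)=1$ (cycle edge and chord), while $d(0,p)=f(v+1)$ and $d(0,q)=f(\overline{v+m})=f(m-v)$, and $(v+1)+(m-v)=m+1$ forces $f(v+1)=f(m-v)$; since $p\ne q$, the set $\{0,v\}$ is not resolving. For $v=m$ use $\{1,2m-1\}$, since $d(m,x)=d(m,-x)$ for all $x$. This proves $\dim_M(M_n)\ge 3$ for every even $n\ge 8$ in a few lines, making your harder congruence route both unnecessary and (for even $m$) impossible. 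Your remaining ingredients --- the distance formula $d(0,k)=f(\bar k)$ with $f(j)=\min\{j,\,m+1-j\}$, the characterization $f(a)=f(b)\iff a=b$ or $a+b=m+1$, and the WLOG normalization --- are correct, but both upper bounds remain unverified sketches: you do not check that $W_4=\{0,1,m,m+1\}$ resolves (note $W_4$ is invariant under the fixed-point-free antipodal automorphism $x\mapsto x+m$, so the candidate collisions at $\bar x=m/2$ for even $m$ genuinely need the boundary check you allude to), and no explicit three-element set is produced for $n\equiv 2\pmod 8$, which is where the bulk of the case analysis in \cite{7} actually lives.
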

C$\acute{a}$ceres $et~al.$ studied  the metric dimension of the Cartesian product of graphs.
\begin{theorem}\cite{cartesian product}\label{11}
 Let $P_m$ be a  path on $m\geq 2$ vertices and $C_n$ be a cycle on $n\geq 3$ vertices. Then the metric dimension of each prism $P_m\times C_n$ is given by
\begin{eqnarray*}
dim_M(P_m\times C_n)=
      \begin{cases}
       2 & ~~ n~~ odd,  \\
       3 & ~~ n~~ even.
      \end{cases}
\end{eqnarray*}
\end{theorem}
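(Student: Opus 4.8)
The plan is to work in the natural coordinates $(i,j)$ with $0\le i\le m-1$ and $j\in\mathbb{Z}_n$ for the vertices of $P_m\times C_n$, where $i$ is the $P_m$-coordinate and $j$ the $C_n$-coordinate. Since distance in a Cartesian product is additive,
\[
 d\big((i,j),(i',j')\big)=|i-i'|+\rho(j,j'),
\]
where $\rho(j,j')=\min\{|j-j'|,\,n-|j-j'|\}$ is the distance in $C_n$. It is convenient to encode a landmark set $W=\{w_1,w_2\}$, $w_s=(a_s,b_s)$, through the map $R(i,j)=\psi(i)+\phi(j)$, where $\psi(i)=(|i-a_1|,|i-a_2|)$ records the path part and $\phi(j)=(\rho(j,b_1),\rho(j,b_2))$ the cycle part; then $W$ resolves $P_m\times C_n$ exactly when $R$ is injective. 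As $P_m\times C_n$ is never a path, $\dim_M\ge 2$ always, so only the upper bounds and the even lower bound need work.

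For the odd case I would exhibit a resolving pair. Write $n=2t+1$ and take $W=\{(0,0),(0,t)\}$ in the bottom layer. Here $\psi(i)=(i,i)$ is diagonal, so $R(i,j)=R(i',j')$ forces $\phi(j')-\phi(j)$ to be a diagonal vector, i.e. $g(j):=\rho(j,0)-\rho(j,t)$ satisfies $g(j)=g(j')$. The first step is the purely cyclic claim that for odd $n$ the gradient $g\colon\mathbb{Z}_n\to\{-t,\dots,t\}$ is a bijection: one checks $g(j)=2j-t$ for $0\le j\le t$ and that the remaining columns supply the other values. Injectivity of $g$ then forces $j=j'$ and hence $i=i'$, so $W$ resolves and $\dim_M=2$.

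For the even upper bound, write $n=2t$ and take $W=\{(0,0),(0,1),(0,t)\}$ in the bottom layer. The crucial point is the antipodal identity $\rho(j,t)=t-\rho(j,0)$ valid in an even cycle, so the first and third coordinates of $R(i,j)$ already determine $\rho(j,0)$, hence $j$ up to the reflection $j\leftrightarrow n-j$; the landmark at column $1$ breaks this reflection through the sign of $\rho(j,1)-\rho(j,0)$, after which $i$ is read off. Hence $\dim_M\le 3$.

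The heart of the argument, and the step I expect to be the main obstacle, is the even lower bound $\dim_M\ge 3$: no pair resolves when $n$ is even. Resolving within a single layer forces $\phi$ injective, which for an even cycle means $b_1,b_2$ are not antipodal; after a rotation and reflection assume $b_1=0$, $b_2=d$ with $1\le d\le t-1$. The obstruction is the central symmetry $\phi(j)+\phi(j+t)=(t,t)$ of the even-cycle representation, combined with the fact that every path-difference $\psi(i)-\psi(i')$ has both coordinates of one parity, namely $i-i'$, which is exactly the parity pattern of the cycle-differences $\phi(j')-\phi(j)$; thus parity cannot separate the two families. The plan is to force a collision $\psi(i)+\phi(j)=\psi(i')+\phi(j+t)$ by choosing a column $j$ for which $(t,t)-2\phi(j)$ is realizable as a path-difference, the central symmetry guaranteeing a matching antipodal column. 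The delicate point is uniformity in $m$: for small $m$ the available path-differences are limited, so the confused pair must be placed in the band of layers between $a_1$ and $a_2$, and the base prism $m=2$ should be dispatched by direct inspection before the general argument. I would isolate the cyclic facts (bijectivity of $g$, the antipodal identity, and the central symmetry) as short lemmas and then assemble the four bounds.
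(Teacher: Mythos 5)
The paper itself gives no proof of this statement; it is quoted from C\'aceres et al.\ \cite{cartesian product}, so your attempt can only be judged on its own merits. Two of your three nontrivial bounds are sound: for odd $n=2t+1$ the function $g(j)=\rho(j,0)-\rho(j,t)$ is indeed a bijection onto $\{-t,\dots,t\}$ (values $2j-t$ of one parity on $0\le j\le t$, the opposite parity on the rest), which makes $\{(0,0),(0,t)\}$ resolving; and for even $n=2t$ the antipodal identity $\rho(j,0)+\rho(j,t)=t$ makes the sum of the first and third coordinates of $R(i,j)$ equal to $2i+t$, so $i$ and then $\rho(j,0)$ are determined, and the landmark in column $1$ separates $j$ from $n-j$ since $\rho(j-1,0)=\rho(j+1,0)$ only at the fixed points of that reflection. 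So $\dim_M\le 2$ (odd) and $\dim_M\le 3$ (even) are in order.

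The genuine gap is in the even lower bound, exactly the step you call the heart: collisions of the antipodal shape $\psi(i)+\phi(j)=\psi(i')+\phi(j+t)$ simply do not exist for all landmark pairs, and the failure is not a small-$m$ phenomenon. Take $a_1=a_2$ (both landmarks in one layer) with $b_1,b_2$ at odd cycle-distance $d$. Then every path-difference $\psi(i)-\psi(i')$ is diagonal, of the form $(c,c)$, whereas your target $(t,t)-2\phi(j)$ is diagonal only when $\rho(j,b_1)=\rho(j,b_2)$; in an even cycle with $d$ odd both arcs between $b_1$ and $b_2$ have odd length, so no equidistant column exists and the target set misses the diagonal entirely --- for every $m$, not just $m=2$. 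Yet such pairs do fail to resolve, via collisions between columns at offsets other than $t$: in $P_5\times C_6$ with $W=\{(2,0),(2,1)\}$ the vertices $(1,1)$ and $(2,2)$ both have representation $(2,1)$, the colliding columns differing by $1$, not by $t=3$. (Your own reduction to ``the band between $a_1$ and $a_2$'' collapses here, since the band is empty.) So dispatching $m=2$ by inspection cannot rescue the plan; you need a second collision mechanism --- e.g.\ pairing a column with a reflected column $2b-j$ rather than the antipodal one, or the case analysis in \cite{cartesian product} --- for landmark pairs whose path-difference set is too thin to meet the antipodal target set. Until that is supplied, $\dim_M\ge 3$ for even $n$ is unproven.
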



Let $G$ be a group and let $S$ be a subset of $G$ that is closed under taking inverse and does not contain the identity element, say $e$.
Recall that the {\it Cayley graph} $Cay(G,S)$ is a graph whose vertex set is $G$ and two vertices $u$ and $v$ are adjacent in it when $uv^{-1}\in S$.
Since $S$ is inverse-closed ($S=S^{-1}$) and does not contain the identity, $Cay(G,S)$ is a simple graph.
It is well known that $Cay(G,S)$ is a connected graph if and only if $S$ is a generating set for $G$.
%
%
In this paper, we study the metric dimension of Cayley graphs on dihedral groups and we characterize all of Cayley graphs on dihedral groups whose metric dimension is two.


\section{\bf{Main results}}
At first, we provide two useful  lemmas on dihedral groups  and a  sharp lower bound for the metric dimension of $3$-regular bipartite graphs which will be used in the sequel.


\begin{lemma} \label{GenCond}
The subset $\{a^ib,a^jb\}$ is a generating set for dihedral group  $D_{2n}=\langle a, b|~a^n=b^2=(ab)^2=e\rangle$ if and only if $\gcd(n,i-j)=1$.
\end{lemma}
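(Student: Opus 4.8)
The plan is to analyze the subgroup $H:=\langle a^ib,\,a^jb\rangle$ directly, and the whole argument hinges on one computation. First I would rewrite the defining relation $(ab)^2=e$ in the usable form $ba^k=a^{-k}b$ for every integer $k$; this follows because $abab=e$ gives $ba=a^{-1}b$, which then propagates to all powers by induction. The decisive observation is that the product of the two given reflections is a \emph{rotation}:
\[
(a^ib)(a^jb)=a^i(ba^j)b=a^i a^{-j}b^2=a^{i-j}.
\]
Hence $H$ automatically contains $a^{i-j}$, and therefore the entire cyclic subgroup $\langle a^{i-j}\rangle=\langle a^{d}\rangle$, where $d:=\gcd(n,i-j)$, of order $n/d$. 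This identity is the engine driving both implications.

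For the forward direction, suppose $\gcd(n,i-j)=1$. Then $a^{i-j}$ has order $n$, so $\langle a^{i-j}\rangle$ is the full rotation subgroup $\langle a\rangle$ of order $n$, and $H$ contains all $n$ rotations $e,a,\dots,a^{n-1}$. Since $H$ also contains the single reflection $a^ib$, multiplying it on the left by each rotation produces all $n$ reflections $a^{k}b$. Thus $H$ contains all $2n$ elements of $D_{2n}$, i.e.\ $\{a^ib,a^jb\}$ is a generating set.

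For the reverse direction I would show $H$ is proper whenever $d=\gcd(n,i-j)>1$ by identifying $H$ explicitly. Setting $R:=\langle a^{d}\rangle$ (order $n/d$), I claim $H=R\cup (a^ib)R$. The containment $R\cup(a^ib)R\subseteq H$ is clear since $H\supseteq\langle a^{i-j}\rangle=R$ and $a^ib\in H$; for the reverse containment I would check that $K:=R\cup(a^ib)R$ is itself a subgroup, a short closure verification that uses only $ba^k=a^{-k}b$ (for instance $(a^ib a^{k_1d})(a^ib a^{k_2d})=a^{(k_2-k_1)d}\in R$), and that $K$ already contains both generators because $a^jb=a^{j-i}(a^ib)\in R(a^ib)=(a^ib)R$. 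Consequently $H=K$ has order $2n/d<2n$, so $H\neq D_{2n}$. Combining the two directions yields the equivalence. The only real obstacle is the bookkeeping in this reverse step, namely confirming closure of $K$ and its equality with $H$; alternatively one can avoid it by reducing an arbitrary word in the two involutions $a^ib,a^jb$ to the normal form $(a^{i-j})^m$ or $a^ib(a^{i-j})^m$ using $(a^ib)^2=(a^jb)^2=e$ and $(a^ib)(a^jb)=a^{i-j}$, which gives the bound $|H|\le 2n/d$ immediately.
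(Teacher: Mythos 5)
Your proof is correct and follows essentially the same route as the paper: the paper's entire proof is the explicit description $\langle a^ib,a^jb\rangle=\{a^{(i-j)t},\,a^{(i-j)t+i}b,\,a^{(i-j)t+j}b\mid t\in\Bbb{Z}\}$ (declared ``straightforward''), which is exactly your identification $H=\langle a^{i-j}\rangle\cup(a^ib)\langle a^{i-j}\rangle$, since $a^{(i-j)t+j}b=a^{(i-j)(t-1)+i}b$ makes the paper's two reflection families one coset. You merely supply the closure and order-counting details via $ba^k=a^{-k}b$ and $(a^ib)(a^jb)=a^{i-j}$ that the paper leaves to the reader.
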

\begin{proof}
It is strightforward using the following fact about the subgroup generated by these elements.
$$\langle a^ib,a^jb\rangle =\{a^{(i-j)t},~a^{(i-j)t+i}b,~a^{(i-j)t+j}b~|~t\in \Bbb{Z} \}.$$
\end{proof}

\begin{lemma} \label{4|n}
If $4\mid n$ and $\gcd(i-j,n)=2$, then $\{ a^{n\over 2},a^ib,a^jb\}$ is not a generating set for $D_{2n}$.
\end{lemma}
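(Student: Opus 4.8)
The plan is to show that all three proposed elements already lie inside the proper subgroup $H=\langle a^ib,a^jb\rangle$, so that adjoining $a^{n/2}$ cannot enlarge it to all of $D_{2n}$. The natural starting point is the explicit description of $H$ recorded in the proof of Lemma~\ref{GenCond}:
\[
\langle a^ib,a^jb\rangle=\{a^{(i-j)t},\ a^{(i-j)t+i}b,\ a^{(i-j)t+j}b \mid t\in\mathbb{Z}\}.
\]
First I would analyze the rotation part. Since $\gcd(i-j,n)=2$, the exponents $(i-j)t$ run over precisely the multiples of $2$ modulo $n$, so $\{a^{(i-j)t}:t\in\mathbb{Z}\}=\langle a^2\rangle$, the subgroup of even powers of $a$, which has order $n/2$.

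Next I would pin down the reflection part. Because $\gcd(i-j,n)=2$ forces $i-j$ to be even, we have $i\equiv j\pmod 2$, and hence every exponent of the form $(i-j)t+i$ or $(i-j)t+j$ is congruent to $i$ modulo $2$. Thus the reflections lying in $H$ are exactly $\{a^kb : k\equiv i \pmod 2\}$, again $n/2$ of them. Counting, $|H|=n/2+n/2=n$, so $H$ is a proper subgroup of index $2$ in $D_{2n}$.

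The decisive step is to observe that $a^{n/2}\in H$, and this is exactly where the hypothesis $4\mid n$ is used: it guarantees that $n/2$ is even, so $a^{n/2}\in\langle a^2\rangle\subseteq H$. Consequently $\langle a^{n/2},a^ib,a^jb\rangle=H\neq D_{2n}$, and the set fails to generate. I expect no serious obstacle here; the only point requiring care is the bookkeeping that $\gcd(i-j,n)=2$ simultaneously identifies the rotation part with $\langle a^2\rangle$ and forces $i\equiv j\pmod2$, and that $4\mid n$ is precisely what places $a^{n/2}$ among the even powers.

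As a cleaner alternative I would argue through the abelianization. For $n$ even, the abelianization of $D_{2n}$ is $\mathbb{Z}_2\times\mathbb{Z}_2$, so the assignment $\psi(a)=1$, $\psi(b)=i\bmod 2$ extends to a nontrivial homomorphism $\psi\colon D_{2n}\to\mathbb{Z}_2$. Using $4\mid n$ one checks $\psi(a^{n/2})=(n/2)\cdot 1=0$, and using $i\equiv j\pmod2$ one checks $\psi(a^ib)=\psi(a^jb)=0$. Hence all three elements lie in the proper subgroup $\ker\psi$, which again shows they cannot generate $D_{2n}$.
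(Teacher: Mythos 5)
Your main argument is correct and is, in substance, the paper's own proof: both hinge on the two facts that $\gcd(i-j,n)=2$ identifies $\langle a^{i-j}\rangle$ with $\langle a^2\rangle$ (the unique subgroup of order ${n\over 2}$ of the cyclic group $\langle a\rangle$), and that $4\mid n$ makes ${n\over 2}$ even, so $a^{n\over 2}\in\langle a^2\rangle\subseteq\langle a^ib,a^jb\rangle$ and adjoining $a^{n\over 2}$ cannot enlarge the subgroup. The only divergence is the last step: the paper concludes $\langle a^ib,a^jb\rangle\neq D_{2n}$ by simply citing Lemma~\ref{GenCond}, whereas you re-derive it by counting $|H|={n\over 2}+{n\over 2}=n$ from the explicit description of $H$; your count is correct (the reflections in $H$ are exactly the $a^kb$ with $k\equiv i\pmod 2$, using $i\equiv j\pmod 2$), but it duplicates work Lemma~\ref{GenCond} already packages, so invoking that lemma would shorten your write-up. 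Your alternative via the homomorphism $\psi\colon D_{2n}\to\mathbb{Z}_2$ with $\psi(a)=1$ and $\psi(b)\equiv i\pmod 2$ is a genuinely different and clean route: it is well defined precisely because $n$ is even, the checks $\psi(a^{n/2})=0$ (using $4\mid n$) and $\psi(a^ib)=\psi(a^jb)=0$ (using $i\equiv j\pmod 2$) are immediate, and it avoids any structural analysis of $\langle a^ib,a^jb\rangle$; what it buys is transparency about exactly where each hypothesis enters, at the mild cost of only showing the three elements lie in \emph{some} proper subgroup rather than identifying the subgroup they generate.
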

\begin{proof}
Since $\langle a^{i-j}\rangle$ and $\langle a^2\rangle$ are two cyclic subgroups of order ${n\over 2}$ in the cyclic group $\langle a\rangle$, we have $\langle a^2\rangle=\langle a^{i-j}\rangle\subseteq \langle \{ a^{n\over 2},a^ib,a^jb\} \rangle$.
Since $4\mid n$, $a^{n\over 2}\in \langle a^2\rangle$ and hence, $\langle \{ a^{n\over 2},a^ib,a^jb\} \rangle=\langle \{ a^ib,a^jb\} \rangle$.
Now the result follows from Lemma \ref{GenCond}.
\end{proof}

\begin{lemma}\label{22}
Let $\Gamma$ be a $3$-regular bipartite graph on $n$ vertices. Then $dim_M(\Gamma)\geq 3$.
\end{lemma}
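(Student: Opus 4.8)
The plan is to prove the contrapositive-flavoured statement directly: I will show that \emph{no} set of one or two vertices can resolve $\Gamma$, which forces $\dim_M(\Gamma)\geq 3$. The entire argument rests on one structural observation that fuses the two hypotheses. Because $\Gamma$ is $3$-regular, every vertex has exactly three distinct neighbours; because $\Gamma$ is bipartite, with parts $A$ and $B$, these three neighbours all lie in the part opposite to the vertex, and two vertices lie in a common part precisely when the distance between them is even. Consequently, for any fixed vertex $w$, the three distances from the neighbours of a vertex $u$ to $w$ all share a single parity. This ``parity-collapse'' is what drives everything.

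First I would dispose of singletons, which is immediate: fixing any $u$, its three neighbours $x_1,x_2,x_3$ all satisfy $d(x_i,u)=1$, so $r(x_1|\{u\})=r(x_2|\{u\})=(1)$ and $\{u\}$ is not resolving. The core step is ruling out pairs. Suppose toward a contradiction that $\{u,v\}$ resolves $\Gamma$, and let $x_1,x_2,x_3$ be the neighbours of $u$. Since each $x_i$ lies in the part opposite to $u$, the three values $d(x_1,v),d(x_2,v),d(x_3,v)$ all have the same parity, and that common parity is opposite to the parity of $d(u,v)$ (indeed $x_i$ and $v$ lie in the same part exactly when $u$ and $v$ lie in different parts). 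By the triangle inequality each $d(x_i,v)\in\{d(u,v)-1,\,d(u,v),\,d(u,v)+1\}$, but the parity constraint rules out the middle value $d(u,v)$. Hence each $d(x_i,v)$ belongs to the two-element set $\{d(u,v)-1,\,d(u,v)+1\}$, and the pigeonhole principle gives indices $i\neq j$ with $d(x_i,v)=d(x_j,v)$. Then $r(x_i|\{u,v\})=(1,d(x_i,v))=(1,d(x_j,v))=r(x_j|\{u,v\})$, contradicting that $\{u,v\}$ distinguishes the distinct vertices $x_i$ and $x_j$.

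The one point that needs care, and which I expect to be the only genuine obstacle, is the degenerate case $d(u,v)=1$: then $v$ is itself a neighbour of $u$, so one of the three values could be $d(\cdot,v)=0$. I would note that $d(x_i,v)=0$ means $x_i=v$, which can happen for at most one of the distinct neighbours; the remaining two neighbours still land in $\{d(u,v)-1,d(u,v)+1\}=\{0,2\}$ with $d(\cdot,v)\neq 0$, hence both equal $2$ and again collide. Thus the pigeonhole conclusion survives in every case. Combining the singleton and pair arguments, no subset of $V(\Gamma)$ of size at most two is resolving, so $\dim_M(\Gamma)\geq 3$, as claimed. I remark that this argument is self-contained and does not require Theorem~\ref{thm:degree of basis elements}; the degree bounds there would be consistent with a cubic graph, so it is the bipartite parity restriction, not the degree condition, that supplies the contradiction.
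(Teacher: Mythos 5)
Your proof is correct and takes essentially the same route as the paper's: both examine the three neighbours of a basis vertex $u$, note via the triangle inequality that each distance to the other basis vertex $v$ lies in $\{d(u,v)-1,\,d(u,v),\,d(u,v)+1\}$, use bipartiteness to exclude the middle value (you phrase this as distance parity, the paper as an odd closed walk of length $2d+1$), and finish by pigeonhole. Your explicit handling of the degenerate case $d(u,v)=1$ is a small extra precaution the paper's write-up glosses over, but it does not change the substance of the argument.
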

\begin{proof}
Since $\Gamma$ is not a path, $dim_M(\Gamma)$ is at least two.
Suppose that $dim_M(\Gamma)=2$ and let $W=\{u, v\}$ be a resolving set for $\Gamma$.
Assume that $d(u, v)=d$ and $N(u)=\{u_1, u_2, u_3\}$. It is easy to see that $d(u_i, v)\in \{d-1, d, d+1\}$, for each
 $1\leq i\leq 3$. If there exist $1\leq i<j\leq 3$ such that $d(u_i, v)=d(u_j, v)$, then $r(u_i|W)=r(u_j|W)$, which is a contradiction.
 Hence, without loss of generality, we can assume that
 $$d(u_1, v)=d-1,~d(u_2, v)=d, ~d(u_3, v)=d+1.$$
Let  $\sigma_1$ be a (shortest) path between two vertices $u$ and $v$ of lengh $d$, and  $\sigma_2$ be a (shortest) path between two vertices $u_2$ and $v$.
Two paths $\sigma_1$ and $\sigma_2$ using the edge $uu_2$ produce
an old closed walk of lengh $2d+1$ in  $\Gamma$  which contradicts the fact that $\Gamma$ is a bipartite graph. 
 For the sharpness of this bound, consider the hypercube $Q_3=K_2\times K_2\times K_2$.
\end{proof}

In Theorem \ref{dih} we characterize  all of Cayley graphs on dihedral groups  whose metric dimension is two.  Recall that the center of $D_{2n}$ is  $\langle a^{n\over2}\rangle$ when $n$ is even,  otherwise it is the trivial subgroup $\{e\}$.

\begin{theorem}\label{dih} Let $S$ be a generating subset of $D_{2n}=\langle a, b|~a^n=b^2=(ab)^2=e\rangle$ such that $e\notin S=S^{-1}$.
Then we have $dim_M(Cay(D_{2n},S))=2$ if and only if  one of the following situations occures.
\begin{itemize}
\item[a)] $n=|S|=2$,
\item[b)] $S=\{a^ib,a^jb\}$ and $\gcd(i-j,n)=1$,
\item[c)] $n$ is odd and $S=\{a^i,a^{-i},a^jb\}$ with $gcd(i,n)=1$ and $j\in\{1,2,...,n\}$.
\end{itemize}
\end{theorem}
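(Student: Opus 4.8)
The plan is to prove both implications by first pinning down the size of $S$. Since $Cay(D_{2n},S)$ is vertex-transitive (right translations are automorphisms), every vertex has degree $|S|$. If $\dim_M(Cay(D_{2n},S))=2$, fix a basis $\{u,v\}$; by Theorem \ref{thm:degree of basis elements} the basis vertices have degree at most three, so $|S|=\deg(u)\le 3$. Discarding $|S|\le 1$ (which forces a single edge, hence dimension $1$, or a non-generating set), we are left with the cases $|S|=2$ and $|S|=3$, and the whole classification reduces to deciding, within each case, which inverse-closed generating sets occur. Conversely, for the sufficiency direction I will simply identify the graph in each of (a)--(c) and read off its dimension from the known models.

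For $|S|=2$: an inverse-closed set of size two either consists of a non-involution and its inverse, or of two distinct involutions. A non-involution in $D_{2n}$ is a non-central rotation, and $\{a^i,a^{-i}\}$ generates only $\langle a\rangle$, so this subcase never generates $D_{2n}$ for $n\ge 2$; hence both elements are involutions, i.e. reflections or the central element $a^{n/2}$. Two reflections $\{a^ib,a^jb\}$ generate $D_{2n}$ exactly when $\gcd(i-j,n)=1$ by Lemma \ref{GenCond}, which is case (b); a reflection together with $a^{n/2}$ generates only a subgroup of order four, hence all of $D_{2n}$ only when $2n=4$, i.e. $n=2$, which is case (a). In every such case the graph is $2$-regular and connected, hence the cycle $C_{2n}$ (or $C_4$ when $n=2$), whose metric dimension is $2$. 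This settles $|S|=2$ in both directions.

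For $|S|=3$: an inverse-closed triple containing a reflection (needed for generation) falls into exactly three families --- three reflections $\{a^ib,a^jb,a^kb\}$ (B1), one reflection with an inverse pair $\{a^i,a^{-i},a^jb\}$ (A1), or two reflections with the central element $\{a^{n/2},a^ib,a^jb\}$ (B2); the only other inverse-closed triple, $\{a^{n/2},a^i,a^{-i}\}$, has no reflection and fails to generate. In B1 every generator is a reflection, so each interchanges the rotations and the reflections, and the graph is bipartite with these two parts; being a connected cubic bipartite graph, Lemma \ref{22} gives $\dim_M\ge 3$, so B1 is excluded. In A1 I will show the set generates iff $\gcd(i,n)=1$ (the rotation part generated is $\langle a^i\rangle$, and the reflection conjugates $a^i$ to $a^{-i}$); in that case, after reindexing the rotations by $R_t=a^{ti}$ and the reflections by $a^{ti}b$ (legitimate since $\gcd(i,n)=1$), the two cosets become parallel $n$-cycles joined by the perfect matching coming from $a^jb$, so the graph is the prism $P_2\times C_n$. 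Theorem \ref{11} then gives dimension $2$ precisely when $n$ is odd, which is exactly case (c), and dimension $3$ when $n$ is even.

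The remaining and hardest case is B2. Here Lemma \ref{4|n} already removes the non-generating instances ($4\mid n$ with $\gcd(i-j,n)=2$), and after normalizing the two reflections by a group automorphism $b\mapsto a^sb$ one may reduce to $S=\{a^{n/2},b,a^db\}$ with $\gcd(d,n/2)=1$. The crux is to recognize this cubic graph as a M\"obius ladder $M_{2n}$: the two reflections already trace out a Hamiltonian $2n$-cycle, and I expect the edges supplied by the central involution $a^{n/2}$ to join antipodal vertices of this cycle, producing exactly the diameters of $M_{2n}$. Granting this, Theorem \ref{21} yields $\dim_M\ge 3$ for $2n\ge 8$, the finitely many small cases ($2n\le 6$) being checked directly, so B2 is excluded. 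Verifying the antipodality of the $a^{n/2}$-edges against the cyclic order produced by the two reflections --- i.e. that the identification with $M_{2n}$ is genuine and not some other cubic graph --- is the main technical obstacle; the prism identification in A1 is a milder instance of the same bookkeeping. Combining the three families completes the necessity direction, while the explicit models ($C_{2n}$ and the prism) give sufficiency.
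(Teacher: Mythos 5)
Your overall architecture coincides with the paper's: bound $|S|\le 3$ via part (2) of Theorem \ref{thm:degree of basis elements} and vertex-transitivity, settle $|S|=2$ by the involution analysis and Lemma \ref{GenCond}, kill three reflections (your B1) by bipartiteness and Lemma \ref{22}, and identify $\{a^i,a^{-i},a^jb\}$ (your A1) as the prism $P_2\times C_n$, invoking Theorem \ref{11}. All of that is sound and matches the paper's Cases 1 and 3.

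However, your case B2 has a genuine gap. You normalize to $S=\{a^{n/2},b,a^db\}$ with $\gcd(d,n/2)=1$ and then assert that ``the two reflections already trace out a Hamiltonian $2n$-cycle,'' identifying the graph as the M\"obius ladder $M_{2n}$. This is true only when $\gcd(d,n)=1$. The generation condition $\gcd(d,n/2)=1$ also admits $\gcd(d,n)=2$, which happens exactly when $n\equiv 2\pmod 4$ and $d$ is even (for $4\mid n$ this possibility is excluded by Lemma \ref{4|n}, as you note, but for $n\equiv 2\pmod 4$ the set still generates, since $a^{n/2}\notin\langle a^2\rangle$ and $\langle a^2,a^{n/2}\rangle=\langle a\rangle$). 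In that subcase $O(a^d)=n/2$, so the two reflections trace two \emph{disjoint} $n$-cycles, one on $\langle a^2\rangle\cup\langle a^2\rangle b$ and one on its complement, and the edges from $a^{n/2}$ form the perfect matching between them: the graph is the prism $P_2\times C_n$ with $n$ even, not $M_{2n}$. A concrete instance is $D_{12}$ with $S=\{a^3,b,a^2b\}$, whose Cayley graph is $P_2\times C_6$ --- bipartite, hence certainly not $M_{12}$, which contains odd cycles. The desired conclusion $\dim_M\ge 3$ still holds there, but by Theorem \ref{11} (prism with $n$ even), not by Theorem \ref{21}; this is precisely the paper's Subcase 2.2, which your sketch omits. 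To repair the proof you must split B2 according to $O(a^{i-j})=n$ versus $O(a^{i-j})=n/2$ (the paper's Subcases 2.1 and 2.2), carry out the antipodality bookkeeping you flagged in the first subcase using $a^{n/2}=(a^{i-j})^{n/2}$, and use the prism identification plus Lemma \ref{4|n} (to see the $2n$ vertices of the two cycles are distinct) in the second.
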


\begin{proof}
Since $D_{2n}$ is not a cyclic group, we have $|S|\geq 2$. Assume that $|S|=2$ and $S=\{x,y\}$. Since $S=S^{-1}$ and $D_{2n}$ is not cyclic, we have  $y\neq x^{-1}$ and $x^2=y^2=e$. If $S=\{a^{n\over 2},a^jb\}$ for some $1\leq j\leq n$, then the condition $D_{2n}=\langle S\rangle$ implies that $n=2$ and $D_{2n}=D_4$.
Otherwise, $S=\{a^ib,a^jb\}$ and using Lemma  \ref{GenCond} we have $\gcd(i-j,n)=1$. Hence, $Cay(D_{2n},S)$ is a connected $2$-regular graph (a cycle) and  $dim_M(Cay(D_{2n},S))=2$. If $|S|\geq 4$, then the degree of each vertex in $Cay(D_{2n},S)$ is at least $4$ and part (2) of Theorem \ref{thm:degree of basis elements} implies that $dim_M(Cay(D_{2n},S))\geq 3$.
Now assume that $|S|=3$.
Since $S$ is a generating set and $e\notin S=S^{-1}$, we consider the following cases.
\\
{\bf Case 1.}  $S=\{a^i, a^{-i}, a^jb\}$.
\\ Since $(a^jb)(a^i)^t(a^jb)=a^{-it}$, the order of $a^i$ is ${n\over \gcd(i,n)}$ and $S$ is a generating set, we have $gcd(i,n)=1$.
Thus $O(a^i)=n$ and   vertices $a^{ni},a^{(n-1)i},...,a^{2i},a^i$ induce an $n$-cycle in $Cay(D_{2n},S)$.
Since $a^j\in \langle a^i\rangle$, there exists $k\in\{1,2,...,n\}$ such that $a^j=a^{ki}$. Therefore
 $n$ vertices $$a^{ki}b, a^{(k+1)i}b, ..., a^{(k+n-2)i}b, a^{(k+n-1)i}b$$
 induce another cycle in $Cay(D_{2n},S)$.
Now for each $1\leq \ell \leq n$ let $M_{\ell}=\{a^{\ell i}, a^{(k+n-\ell) i}b\}$.  Note that $a^{ni}=e$ and $M_s\cap M_k=\emptyset$ for each $s\neq k$.
Since $a^{\ell i} (a^{(k+n-\ell) i}b)^{-1}=a^{ki}b=a^jb\in S$, two vertices $a^{\ell i}$ and $a^{(k+n-\ell) i}b$ are adjacent in $Cay(D_{2n},S)$.
Thus,  the edges $M_1, M_2, ..., M_n$ provide a perfect matching in $Cay(D_{2n},S)$.
Consequently, $Cay(D_{2n},S)$ is isomorphic to $P_2\times C_n$.
Now Theorem \ref{11} implies that $dim_M(Cay(D_{2n},S))=2$ if and only if $n$ is odd.
\\\\
{\bf Case 2.}  $S=\{a^{n/2}, a^ib, a^jb\}$ where $n$ is an even number.
\\ Let $x=a^ib$ and $y=a^jb$. Since  $a^{n/2}$ is in the center of $D_{2n}$ and $O(a^{n\over2})=2$,  $\langle S\rangle=\langle a^ib, a^jb\rangle\cup a^{n/2}\langle a^ib, a^jb\rangle$.
Thus, $a\in \langle a^ib, a^jb\rangle$ or $a\in a^{n/2}\langle a^ib, a^jb\rangle$.
Note that $|\langle a^{n/2}, a^ib\rangle|=|\langle a^{n/2}, a^jb\rangle|=4$. Thus, $a\notin \langle a^{n/2}, a^ib\rangle$ and $a\notin \langle a^{n/2}, a^jb\rangle$.
\\\\
{\bf Subcase 2.1.} $a\in \langle a^ib, a^jb\rangle$.
\newline
In this case, using Lemma \ref{GenCond} we have $gcd(i-j,n)=1$.  Thus, $O(xy)=O(a^{i-j})=n$ and
 $Cay(D_{2n},S)$ contains a Hamiltonian cycle (on $2n$ vertices) as below.
\begin{equation*}
  e\sim y\sim xy\sim yxy\sim (xy)^2\sim y(xy)^2\sim \ldots \sim y(xy)^{n-1}\sim (xy)^{n}=e.
\end{equation*}
For each divisor $d$ of $n$ the cyclic group $\Bbb{Z}_n$ has unique cyclic subgroup of order $d$.
Since $\langle a^{i-j}\rangle=\langle a\rangle$ and $|\langle a^{(i-j){n\over2}}\rangle|=|\langle a^{n\over2}\rangle|=2$ ,
we have $a^{n/2}=(a^{i-j})^{n/2}$.
For each $1\leq\ell\leq {n\over 2}$ let $M_{\ell}=\left\{(xy)^{\ell},(xy)^{\ell+n/2}\right\}$ and $T_{\ell}=\left\{y(xy)^{\ell},y(xy)^{\ell+n/2}\right\}$.
Note that $M_s\neq M_k$ and $T_s\neq T_k$ for each $s\neq k$. Also, each $M_\ell$ is an edge in $Cay(D_{2n},S)$ because
$$(xy)^{\ell+n/2}(xy)^{-\ell}=(xy)^{n/2}=(a^{i-j})^{n/2}=a^{n/2}\in S$$
Thus, $\{M_1,M_2,...,M_{n\over2}\}$ is a matching in $X(D_n,S)$. Similarly, $\{T_1,T_2,...,T_{n\over2}\}$ is a matching  and hence,
$\{M_1,M_2,...,M_{n\over2}, T_1,T_2,...,T_{n\over2}\}$ provides a perfect matching for $Cay(D_{2n},S)$.
Therefore, we have a cycle on $2n$ vertices in which its opposite pairs of vertices are adjacent (see Figure \ref{fig2} (i)).
This implies that $Cay(D_{2n},S)$ is  a $M\ddot{o}bius~ Ladder$  and  by Theorem \ref{21},  $dim_M(Cay(D_{2n},S))\neq 2$.
\\\\
{\bf Subcase 2.2.} $a\in a^{n/2}\langle a^ib, a^jb\rangle$.
\\
In this case, there exists $k\in \Bbb{Z}$ such that $a=a^{n\over2}(a^{i-j})^k$. Hence, $a^{{n\over2}+1}\in \langle a^{i-j}\rangle$ and  $a^2=(a^{{n\over2}+1})^2\in \langle a^{i-j}\rangle$. Thus, $|\langle a^{i-j}\rangle|\geq |\langle a^2\rangle|={n\over2}$. Hence, $O(a^{i-j})={n\over2}$ or $O(a^{i-j})=n$. The situation $O(a^{i-j})=n$ is considered in Subcase 2.1 and we can assume that
 $O(xy)=O(a^{i-j})=n/2$. Therefore, $Cay(D_{2n},S)$ contains two $n$-cycles as below.
\begin{eqnarray*}
  &e\sim y\sim xy\sim yxy\sim (xy)^2\sim y(xy)^2\sim \ldots \sim y(xy)^{n/2-1}\sim (xy)^{n/2}=e,&\\
&   a^{n/2}\sim ya^{n/2}\sim (xy)a^{n/2}\sim y(xy)a^{n/2}\sim (xy)^2a^{n/2}
        \sim \ldots \sim (xy)^{n/2}a^{n/2}=a^{n/2}.&
\end{eqnarray*}
The fact $O(xy)=n/2$ implies that $n$ vertices apeared in each cycle are distinct.
Also, using the appearence of $b$, it is easy to see that $(xy)^t\neq y(xy)^s a^{n\over 2}$ and $y(xy)^t\neq (xy)^sa^{n\over 2}$ for each $t,s\in\Bbb{Z}$.
If there exist $t,s\in\Bbb{Z}$ such that $(xy)^t=(xy)^sa^{n\over 2}$ or $y(xy)^t=y(xy)^sa^{n\over 2}$, then $a^{n\over 2}=(a^{i-j})^{(t-s)}\in \langle a^{i-j}\rangle =\langle a^2\rangle$.  Thus, $4\mid n$ which is a contradiction (see Lemma \ref{4|n}).
Therefore, all of $2n$ vertices appeared in these cycles are distinct. Since $a^{n\over2}\in S$, corresponding vertices of two cycles are adjacent (see Figure \ref{fig2} (ii)). Hence, $Cay(D_{2n},S)$ is isomorphic to $P_2\times C_n$ and Theorem \ref{11} implies that $dim_M(Cay(D_{2n},S))=3$.
\\ {\bf Case 3.}  $S=\{a^ib, a^jb, a^tb\}$.
\\ Let $H=\langle a\rangle$ and hence,  $V(Cay(D_{2n},S))=H\cup Hb$.
If $a^s, a^t\in H$, then $a^sa^{-t}=a^{s-t}\notin S$. Thus, the subset  $H$ of vertices induces an independent set in $Cay(D_{2n},S)$.
Similarly,  $Hb$ is an independent set.
Consequently, $Cay(D_{2n},S)$ is a $3$-regular bipartite graph on $2n$ vertices. Now Lemma \ref{22} implies that  $dim(Cay(D_{2n},S))$ is at least three.
This completes the proof.
\end{proof}
\begin{figure}[h!]
\begin{center}
\[\begin{tikzpicture}
 \vertex (a) at (0:1.5) [label=right:$e$][fill=black] {};
 \vertex (b) at (36:1.5) [label=above right:$y$] [fill=black]{};
 \vertex (c) at (72:1.5) [label=above:$xy$] [fill=black]{};
  \vertex (d) at (108:1.5) [label=above:$yxy$] [fill=black]{};
 \vertex (f) at (144:1.5) [label=above left:$(xy)^2$] [fill=black]{};
\vertex (g) at (180:1.5) [label= left:$(xy)^{n/2}$] [fill=black]{};
 \vertex (aa) at (216:1.5) [label=below left:$y(xy)^{n/2}$][fill=black] {};
 \vertex (bb) at (252:1.5) [label=below left:$(xy)^{n/2+1}$] [fill=black]{};
 \vertex (cc) at (288:1.5) [label=below:$y(xy)^{n/2+1}$] [fill=black]{};
  \vertex (dd) at (324:1.5) [label=below right:$(xy)^{n/2+2}$] [fill=black]{};
   \vertex (ddd) at (-90:2.25) [label=below:$(i)$] [draw=none]{};
  \vertex (abc) at (7,0)[draw=none]{};
 \vertex (a1) at ([shift={(abc)}]90:1.5) [label=below:$e$][fill=black] {};
 \vertex (b1) at ([shift={(abc)}]45:1.5) [label=below:$y$] [fill=black]{};
 \vertex (c1) at ([shift={(abc)}]360:1.5) [label=left:$xy$] [fill=black]{};
  \vertex (d1) at ([shift={(abc)}]-45:1.5) [label=above left:$yxy$] [fill=black]{};
 \vertex (f1) at ([shift={(abc)}]180:1.5) [label=right:$(xy)^{n/2-1}$] [fill=black]{};
\vertex (g1) at ([shift={(abc)}]135:1.5) [label= below right:$y(xy)^{n/2-1}$] [fill=black]{};
 \vertex (aa1) at ([shift={(abc)}]90:2) [label=above:$\footnotesize{u^{n/2}}$][fill=black] {};
 \vertex (bb1) at ([shift={(abc)}]45:2) [label=above right:$\footnotesize{yu^{n/2}}$] [fill=black]{};
 \vertex (cc1) at ([shift={(abc)}]360:2) [label=right:$\footnotesize{xyu^{n/2}}$] [fill=black]{};
  \vertex (dd1) at ([shift={(abc)}]-45:2) [label=below right:$\footnotesize{yxyu^{n/2}}$] [fill=black]{};
 \vertex (ff1) at ([shift={(abc)}]180:2) [label=left:$\footnotesize{(xy)^{n/2-1}u^{n/2}}$] [fill=black]{};
\vertex (gg1) at ([shift={(abc)}]135:2) [label=above left:$\footnotesize{y(xy)^{n/2-1}u^{n/2}}$] [fill=black]{};
\vertex (gg11) at ([shift={(abc)}]-90:3) [label=above:$(ii)$] [draw=none]{};
 \path
(a) edge (g)
(b) edge (aa)
(c) edge (bb)
(d) edge (cc)
(f) edge (dd)
(a1) edge (aa1)
(b1) edge (bb1)
(c1) edge (cc1)
(d1) edge (dd1)
(f1) edge (ff1)
(g1) edge (gg1);
\draw [dashed] ([shift={(abc)}]-45:2) arc (-45:-180:2);
\draw [ultra thick] ([shift={(abc)}]90:2) arc (90:45:2);
\draw ([shift={(abc)}]45:2) arc (45:0:2);
\draw [ultra thick] ([shift={(abc)}]0:2) arc (0:-45:2);
\draw ([shift={(abc)}]90:2) arc (90:135:2);
\draw [ultra thick] ([shift={(abc)}]135:2) arc (135:180:2);
\draw [ultra thick] ([shift={(abc)}]90:1.5) arc (90:45:1.5);
\draw  ([shift={(abc)}]45:1.5) arc (45:0:1.5);
\draw [ultra thick] ([shift={(abc)}]0:1.5) arc (0:-45:1.5);
\draw  ([shift={(abc)}]90:1.5) arc (90:135:1.5);
\draw [ultra thick] ([shift={(abc)}]135:1.5) arc (135:180:1.5);
\draw
[dashed] ([shift={(abc)}]-45:1.5) arc (-45:-180:1.5);
\draw  ([shift={(abc)}]180:1.5) arc (180:-45:1.5);

\draw [ultra thick] (0:1.5) arc (0:36:1.5);
\draw  (36:1.5) arc (36:72:1.5);
\draw [ultra thick] (72:1.5) arc (72:108:1.5);
\draw  (108:1.5) arc (108:144:1.5);
\draw [dashed] (144:1.5) arc (144:180:1.5);
\draw [ultra thick] (180:1.5) arc (180:216:1.5);
\draw  (216:1.5) arc (216:252:1.5);
\draw [ultra thick] (252:1.5) arc (252:288:1.5);
\draw  (288:1.5) arc (288:324:1.5);
\draw [dashed] (324:1.5) arc (324:360:1.5);
;
\end{tikzpicture}\]
\caption{{\footnotesize $(i)$ A $M\ddot{o}bius~ Ladder$ graph, and $(ii)$ the Cartesian product $P_2\times C_n$.}}
\label{fig2}
\end{center}
\end{figure}
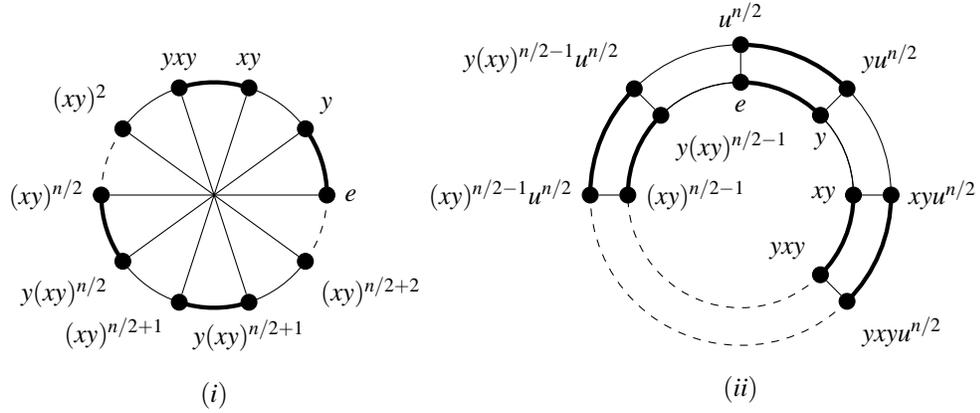
\begin{corollary}
If $S\subseteq D_{2n}$ such that $e\notin S=S^{-1}$ and $|S|\geq 4$, then $\dim_M(Cay(D_{2n},S))\geq 3$.
\end{corollary}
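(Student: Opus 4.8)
The plan is to obtain this as an immediate consequence of the degree restriction on bases of $2$-dimensional graphs recorded in Theorem~\ref{thm:degree of basis elements}; indeed, the corollary simply isolates the ``$|S|\geq 4$'' clause that already appears inside the proof of Theorem~\ref{dih}. The whole argument rests on the observation that a Cayley graph is regular, so that the local degree bound of Theorem~\ref{thm:degree of basis elements} can be applied uniformly at every vertex.

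First I would check that $Cay(D_{2n},S)$ is vertex-transitive. For each $h\in D_{2n}$ the right-translation $g\mapsto gh$ is a graph automorphism, since the adjacency condition $(gh)(g'h)^{-1}=g(g')^{-1}\in S$ depends only on $g$ and $g'$; these automorphisms act transitively on the vertex set $D_{2n}$. Hence the graph is regular, and because the neighbourhood of the identity is exactly $\{s:\,es^{-1}\in S\}=S$, every vertex has degree $|S|$. Consequently, when $|S|\geq 4$ every vertex of $Cay(D_{2n},S)$ has degree at least $4$. Now suppose toward a contradiction that $\dim_M\bigl(Cay(D_{2n},S)\bigr)=2$ and let $\{u,v\}$ be a basis. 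Part (2) of Theorem~\ref{thm:degree of basis elements} forces the degrees of $u$ and $v$ to be at most three, contradicting the fact that both have degree $|S|\geq 4$. Thus $\dim_M\neq 2$. Since the graph is regular of degree at least $4$ it is not a path, so the characterization of graphs of metric dimension one recalled in the introduction gives $\dim_M\geq 2$; combined with $\dim_M\neq 2$ this yields $\dim_M\geq 3$.

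I expect essentially no genuine obstacle here: the only points to verify are the vertex-transitivity (hence regularity) of the Cayley graph and the applicability of Theorem~\ref{thm:degree of basis elements}. One mild caveat is that metric dimension presupposes a connected graph, so the statement is to be read under the standing assumption that $S$ generates $D_{2n}$ (equivalently, that $Cay(D_{2n},S)$ is connected), which is the setting of the paper throughout. An equally short alternative would be to invoke Theorem~\ref{dih} directly: each of its cases (a), (b), (c) has $|S|\leq 3$, so any generating set with $|S|\geq 4$ lies outside the $\dim_M=2$ classification and therefore must satisfy $\dim_M\geq 3$.
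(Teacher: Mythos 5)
Your proof is correct and takes essentially the same route as the paper: the corollary is precisely the $|S|\geq 4$ step inside the proof of Theorem~\ref{dih}, where the Cayley graph's regularity gives every vertex degree $|S|\geq 4$, so part (2) of Theorem~\ref{thm:degree of basis elements} rules out any two-element basis. Your additional remarks --- that connectivity (i.e., $S$ generating $D_{2n}$) must be assumed, and that $\dim_M\geq 2$ follows since a regular graph of degree at least $4$ is not a path --- are sound refinements of the same argument.
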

We declare that we have no competing interests

\end{document}